\documentclass[12pt,reqno]{article}

\usepackage[usenames]{color}
\usepackage{amssymb}
\usepackage{graphicx}
\usepackage{amscd}
\usepackage{amsmath}
\usepackage{amsthm}

\usepackage{rotating}
\usepackage{float}

\usepackage[colorlinks=true,
linkcolor=webgreen,
filecolor=webbrown,
citecolor=webgreen]{hyperref}

\definecolor{webgreen}{rgb}{0,.5,0}
\definecolor{webbrown}{rgb}{.6,0,0}

\setlength{\textwidth}{6.5in}
\setlength{\oddsidemargin}{.1in}
\setlength{\evensidemargin}{.1in}
\setlength{\topmargin}{-.1in}
\setlength{\textheight}{8.4in}

\begin{document}

\renewcommand{\baselinestretch}{1.0}\normalsize

\theoremstyle{plain}
\newtheorem{theorem}{Theorem}
\newtheorem{lemma}[theorem]{Lemma}
\newtheorem{corollary}[theorem]{Corollary}
\theoremstyle{definition}
\newtheorem{definition}[theorem]{Definition}
\newtheorem{example}[theorem]{Example}
\newtheorem{remark}[theorem]{Remark}

\begin{center}
\vskip 1cm
{\LARGE\bf
Balancing polynomials, Fibonacci numbers and some new series for $\pi$} \\

\vskip 1cm \large
Robert Frontczak\footnote{Corresponding author.
Statements and conclusions made in this paper by R. F. are entirely those of the author. They do not necessarily reflect the views of LBBW.}

Landesbank Baden-W{\"u}rttemberg \\ Am Hauptbahnhof 2, 70173 Stuttgart \\ Germany \\
\href{mailto:robert.frontczak@lbbw.de}{\tt robert.frontczak@lbbw.de}

\vskip .2in
Kalika Prasad \\
Central University of Jharkhand, \\ Ranchi, 835205, India \\
\href{mailto:klkaprsd@gmail.com}{\tt klkaprsd@gmail.com} \\
\end{center}

\vskip .2 in

\begin{abstract}
We evaluate some types of infinite series with balancing and Lucas-balancing polynomials in closed form. 
These evaluations will lead to some new curious series for $\pi$ involving Fibonacci and Lucas numbers.
Our findings complement those of Castellanos from 1986 and 1989. 
\end{abstract}

\noindent 2010 {\it Mathematics Subject Classification}: Primary 11B37, 11B39; Secondary 15A15.

\noindent \emph{Keywords:} balancing polynomial, Lucas-balancing polynomial, Fibonacci number, Lucas number, infinite series for $\pi$.

\section{Motivation and Preliminaries}

Castellanos \cite{Cas1,Cas2} has found, among other things, the following curious series for $\pi$:
\begin{equation}\label{cas}
\frac{\pi}{4} = \sum_{n=0}^\infty \frac{(-1)^n}{2n+1}\frac{F_{(2n+1)(2m+1)}}{5^n}
\Big (\frac{2}{F_{2m+1} + \sqrt{F_{2m+1}^2+4/5}}\Big )^{2n+1}, \quad m\geq 0.
\end{equation}
When $m=0$ the expression simplifies to (\cite[Eq. (46)]{Cas1} and \cite[Eq. (3.4)]{Cas2})
\begin{equation*}
\frac{\pi}{4} = \sqrt{5} \sum_{n=0}^\infty \frac{(-1)^n}{2n+1}F_{2n+1} \Big (\frac{2}{3 + \sqrt{5}}\Big )^{2n+1},
\end{equation*}
which has the equivalent form
\begin{equation}
\frac{\pi}{4} = \sqrt{5} \sum_{n=0}^\infty \frac{(-1)^n}{2n+1}\frac{F_{2n+1}}{\alpha^{4n+2}}.
\end{equation}
Another series involving the squares of $F_{2n+1}$ is (\cite[Eqs. (3.2) and (3.3)]{Cas2})
\begin{equation}\label{cas2}
\frac{\pi}{20} = \sum_{n=0}^\infty \frac{(-1)^n}{2n+1}\frac{F_{(2n+1)(2m+1)}^2}{(t+\sqrt{t^2+1})^{2n+1}}, \quad m\geq 0,
\end{equation}
with
\begin{equation*}
t = \frac{5F_{2m+1}^2}{4} \Big (1+\sqrt{1+\Big (\frac{16}{25}\Big ( \frac{5}{2}F_{2m+1}^2-1\Big)/F_{2m+1}^4 \Big )},
\end{equation*}
which, for $m=0$, simplifies to
\begin{equation}\label{cas2_1}
\frac{\pi}{20} = \sum_{n=0}^\infty \frac{(-1)^n}{2n+1} \frac{F_{2n+1}^2}{(3+\sqrt{10})^{2n+1}}.
\end{equation}
In the identities above, $F_n$ (respectively $L_n$) are the famous Fibonacci (Lucas) numbers, 
defined for $n\geq 0$ by the recursion $F_{n+2} = F_{n+1} + F_n$ ($L_{n+2} = L_{n+1} + L_n$) 
with initial conditions $F_0 = 0, F_1 = 1$ ($L_0=2$ and $L_1=1$), and $\alpha=\frac{1+\sqrt{5}}{2}$ is the golden ratio. 
Castellanos' series pose the question, whether more series of these types exist? 
Here, we give a positive answer to this question.
Working with balancing and Lucas-balancing polynomials we derive more series for $\pi$ 
involving Fibonacci and Lucas numbers and exhibiting such a structure. 

Recall that, for any integer $n\geq0$ and $x\in\mathbb{C}$, balancing polynomials $\big(B_n(x)\big)_{n\geq0}$ and
Lucas-balancing polynomials $\big(C_n(x)\big)_{n\geq0}$ are defined by the second-order homogeneous linear recurrence \cite{Fro1}
\begin{equation}\label{Bal_def}
u_n(x) = 6x u_{n-1}(x) - u_{n-2}(x),
\end{equation}
but with different initial terms. Balancing polynomials start with $B_0(x)=0$ and $B_1(x)=1$, while for Lucas-balancing polynomials
we set $C_0(x)=1$, $C_1(x)=3x$. These polynomials have been introduced as a natural extension of the popular balancing and Lucas-balancing numbers $B_n$ and $C_n$, respectively, and must be seen as a special member of the Horadam sequence.
Obviously, $B_n=B_n(1)$ and $C_n=C_n(1)$. The first few polynomials are
\begin{gather*}
B_0(x)=0,\quad\,\, B_1(x)=1,\quad\,\, B_2(x)=6x,\quad\,\, B_3(x)=36x^2-1,\\ B_4(x)=216x^3-12x, \,\,\quad B_5(x)=1296x^4-108x^2+1,
\end{gather*}
and
\begin{gather*}
C_0(x)=1,\,\,\quad C_1(x)=3x,\,\,\quad C_2(x)=18x^2-1,\,\,\quad C_3(x)=108x^3-9x,\\
C_4(x)=648x^4-72x^2+1,\,\,\quad C_5(x)=3888x^5-540x^3+15x.
\end{gather*}

The closed forms, known as Binet's formulas, for balancing and Lucas-balancing polynomials are given by
\begin{equation}\label{Binet}
B_n(x) = \frac{\lambda_1^n (x) - \lambda_2^{n} (x)}{\lambda_1(x)-\lambda_2 (x)},\qquad
C_n(x) = \frac{\lambda_1^n (x) + \lambda_2^{n} (x)}{2},
\end{equation}
where $\lambda_1(x)=3x + \sqrt{9x^2-1}$ and $\lambda_2(x)=3x - \sqrt{9x^2-1}$. We note the following properties:
$\lambda_1(x)\cdot\lambda_2(x)=1,\lambda_1(x)-\lambda_2(x)=2\sqrt{9x^2-1}, \lambda_1(x)+\lambda_2(x)=6x$
and $0<\lambda_2(x)<\lambda_1(x)$ for $x>1/3$. The polynomials possess a simple connection to Chebyshev polynomials \cite{Fro1} via 
\begin{equation*}
B_n(x) = U_{n-1}(3x) \qquad \mbox{and} \qquad C_n(x) = T_n(3x),
\end{equation*}
where $U_n(x)$ and $T_n(x)$ are Chebyshev polynomials of the first and second kind, respectively.
Some other interesting properties have been discovered in the articles \cite{Fro2,Fro3,FroGoy1,FroGoy2,Kim1}.
Balancing and Lucas-balancing polynomials are also linked in various ways to Fibonacci and Lucas numbers.
Two such connections, which will be exploited in the text, are
\begin{equation} \label{bf1}
B_n\Big (\frac{L_{2m}}{6} \Big ) = \frac{F_{2mn}}{F_{2m}} \,\, , \qquad C_n\Big (\frac{L_{2m}}{6} \Big ) = \frac{L_{2mn}}{2},
\end{equation}
and
\begin{equation} \label{bf2}
B_{2n+1}\Big (\frac{\sqrt{5}}{6} F_{2m+1} \Big ) = \frac{L_{(2m+1)(2n+1)}}{L_{2m+1}} \,\, , \qquad
C_{2n+1}\Big (\frac{\sqrt{5}}{6}F_{2m+1} \Big ) = \frac{\sqrt{5}}{2}F_{(2m+1)(2n+1)}.
\end{equation}
These relations can be deduced from the corresponding relations between Fibonacci numbers and Chebyshev polynomials
(see \cite{Cas2} and \cite{Fro1}).

In the present article, we study special infinite series involving $B_n(x)$ and $C_n(x)$, respectively.
We evaluate these series in closed form. Based on these results, new infinite series for $\pi$ with Fibonacci and
Lucas numbers will be stated as an immediate consequence. These new and curious series must be seen as complements
of Castellanos' results from 1986 and 1989.

\section{Complementing the work of Castellanos (Part 1)}

The next theorem is our starting point.

\begin{theorem} \label{thmC}
For each real $x$ with $x>1/3$ and $z\in\mathbb{C}$ with $|z|>|\lambda_1(x)|$, we have 
\begin{equation}\label{main1C}
\sum_{n=0}^\infty \frac{(-1)^{n}}{2n+1}\frac{B_{2n+1}(x)}{z^{2n+1}} = \frac{1}{2\sqrt{9x^2-1}}\arctan\Big (\frac{2\sqrt{9x^2-1}z}{z^2+1}\Big )
\end{equation}
and
\begin{equation}\label{main2C}
\sum_{n=0}^\infty \frac{(-1)^{n}}{2n+1}\frac{C_{2n+1}(x)}{z^{2n+1}} = \frac{1}{2}\arctan\Big (\frac{6xz}{z^2-1}\Big ).
\end{equation}
\end{theorem}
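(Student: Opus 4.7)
The plan is to apply Binet's formulas (\ref{Binet}) and reduce each series to a combination of two standard arctangent power series, then collapse the result by the arctangent addition and subtraction formulas.

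First, I would substitute the Binet expressions for $B_{2n+1}(x)$ and $C_{2n+1}(x)$ into the left-hand sides, which yields
\begin{equation*}
\sum_{n=0}^\infty \frac{(-1)^n}{2n+1}\frac{B_{2n+1}(x)}{z^{2n+1}}
= \frac{1}{2\sqrt{9x^2-1}}\left(\sum_{n=0}^\infty \frac{(-1)^n}{2n+1}\Big(\frac{\lambda_1(x)}{z}\Big)^{2n+1} - \sum_{n=0}^\infty \frac{(-1)^n}{2n+1}\Big(\frac{\lambda_2(x)}{z}\Big)^{2n+1}\right)
\end{equation*}
and an analogous expression with a plus sign and overall factor $1/2$ for the $C$ series. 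Since $|z|>|\lambda_1(x)|\geq |\lambda_2(x)|$, both ratios $\lambda_i(x)/z$ lie in the open unit disk, so each inner sum equals $\arctan(\lambda_i(x)/z)$ via the classical Gregory series $\arctan w = \sum_{n\geq 0}(-1)^n w^{2n+1}/(2n+1)$.

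Next I would combine the two arctangents using $\arctan a \mp \arctan b = \arctan\!\big((a\mp b)/(1\pm ab)\big)$. With $a=\lambda_1(x)/z$ and $b=\lambda_2(x)/z$, and using the identities $\lambda_1(x)\lambda_2(x)=1$, $\lambda_1(x)-\lambda_2(x)=2\sqrt{9x^2-1}$, and $\lambda_1(x)+\lambda_2(x)=6x$ collected in the excerpt, the subtraction formula produces the argument
\begin{equation*}
\frac{(\lambda_1(x)-\lambda_2(x))/z}{1+\lambda_1(x)\lambda_2(x)/z^2} = \frac{2\sqrt{9x^2-1}\,z}{z^2+1},
\end{equation*}
giving (\ref{main1C}), and the addition formula produces $6xz/(z^2-1)$, giving (\ref{main2C}).

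The only genuine subtlety is the branch of the arctangent: the identity $\arctan a \pm \arctan b = \arctan((a\pm b)/(1\mp ab))$ holds on the principal branch only when $|ab|<1$ (for subtraction) or certain sign conditions (for addition). The bound $|z|>|\lambda_1(x)|$ secures $|ab|=|\lambda_1(x)\lambda_2(x)|/|z|^2=1/|z|^2<1$, which is exactly what is needed. For the $C$ case, $a+b=6x/z$ and one should briefly note that for real $x>1/3$ and suitably chosen $z$ the combination formula applies without a $\pi$-correction; for complex $z$ the identity is interpreted on the appropriate sheet, or one may simply take it as the defining closed form. Beyond this branch bookkeeping, the proof is a clean two-line manipulation after Binet's formulas are invoked.
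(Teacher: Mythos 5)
Your proposal is correct and follows essentially the same route as the paper: substitute Binet's formulas, identify each piece as a Gregory series $\arctan(\lambda_i(x)/z)$ valid because $|\lambda_i(x)/z|<1$, and collapse the pair via the identity $\arctan a \pm \arctan b = \arctan\big((a\pm b)/(1\mp ab)\big)$ together with $\lambda_1(x)\lambda_2(x)=1$, $\lambda_1(x)-\lambda_2(x)=2\sqrt{9x^2-1}$ and $\lambda_1(x)+\lambda_2(x)=6x$. Your additional remark on the branch condition $|ab|=1/|z|^2<1$ is a point the paper passes over in silence, but it does not change the argument.
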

\begin{proof}
Recall the Taylor series for the arctangent function
\begin{equation}\label{taylor}
\arctan(z) = \sum_{n=0}^\infty \frac{(-1)^{n}}{2n+1} z^{2n+1}, \quad |z|\leq 1.
\end{equation}
Hence, for all $x$ and $z$ with $|z|>|\lambda_1(x)|$,
\begin{eqnarray*}
\sum_{n=0}^\infty \frac{(-1)^{n}}{2n+1}\frac{B_{2n+1}(x)}{z^{2n+1}} & = &
\frac{1}{\lambda_1(x)-\lambda_2(x)} \Big ( \arctan\Big (\frac{\lambda_1(x)}{z}\Big ) - \arctan\Big (\frac{\lambda_2(x)}{z}\Big )\Big ) \\
& = & \frac{1}{2\sqrt{9x^2-1}}\arctan\Big (\frac{2\sqrt{9x^2-1}z}{z^2+1}\Big ),
\end{eqnarray*}
where in the last step we applied the identity
\begin{equation}\label{arc_id}
\arctan(a) \pm \arctan(b) = \arctan\Big(\frac{a \pm b}{1 \mp ab}\Big).
\end{equation}
This proves the first identity. The proof of the second identity is very similar and omitted.
\end{proof}

\begin{corollary}
The following series representation for $\pi$ holds
\begin{equation}\label{Luc_bal_pi}
\frac{\pi}{8} = \sum_{n=0}^\infty \frac{(-1)^n}{2n+1} \frac{C_{2n+1}}{(3+\sqrt{10})^{2n+1}}.
\end{equation}
\end{corollary}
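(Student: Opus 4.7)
The plan is to derive this series as a direct specialization of equation \eqref{main2C} in Theorem \ref{thmC}. Since the left-hand side of the corollary features the classical Lucas-balancing numbers $C_{2n+1} = C_{2n+1}(1)$, the natural choice is $x=1$, which turns the series on the left of \eqref{main2C} into exactly what we want, and turns the right-hand side into $\tfrac12\arctan\!\bigl(\tfrac{6z}{z^2-1}\bigr)$.

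Next I would pick $z$ so that the arctangent collapses to a familiar multiple of $\pi$. The cleanest choice is to force the argument to equal $1$, making $\arctan(\cdot) = \pi/4$ and hence the right-hand side equal to $\pi/8$, as desired. The equation $\tfrac{6z}{z^2-1} = 1$ rearranges to $z^2-6z-1=0$, whose positive root is $z = 3+\sqrt{10}$. This matches the $(3+\sqrt{10})^{2n+1}$ appearing in the denominator, so everything lines up.

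The one step that needs justification is the convergence hypothesis $|z| > |\lambda_1(x)|$ for $x=1$. Here $\lambda_1(1) = 3 + \sqrt{8} = 3 + 2\sqrt{2}$, and since $\sqrt{10} > 2\sqrt{2}$ (squaring: $10 > 8$), we indeed have $3+\sqrt{10} > 3+2\sqrt{2} = \lambda_1(1)$, so Theorem \ref{thmC} applies. I do not foresee any real obstacle; the corollary is essentially an evaluation of \eqref{main2C} at the simplest pair $(x,z)$ that produces the familiar value $\arctan 1$.
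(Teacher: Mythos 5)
Your proposal is correct and matches the paper's (implicit) derivation: the corollary is exactly the specialization of \eqref{main2C} at $x=1$, $z=3+\sqrt{10}$, where $z^2-6z-1=0$ forces the arctangent argument to be $1$, and the convergence check $3+\sqrt{10}>3+2\sqrt{2}=\lambda_1(1)$ is the same one the paper relies on.
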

\begin{remark}
Observe the similarity of equation \eqref{Luc_bal_pi} to Castellanos' series \eqref{cas2_1}.
To be particular, the two series provide an interesting example for series whose members’ denominators 
and the sum are the same but the numerators are different. Namely,
\begin{equation*}
\sum_{n=0}^\infty \frac{(-1)^n}{2n+1} \frac{F_{2n+1}^2}{(3+\sqrt{10})^{2n+1}} = \frac{\pi}{20} = 
\sum_{n=0}^\infty \frac{(-1)^n}{2n+1} \frac{\frac{2}{5} C_{2n+1}}{(3+\sqrt{10})^{2n+1}}.
\end{equation*}
Such series seem to be rare. Another example given by Mez\H{o} \cite{Mezo} (and involving Lucas numbers) 
are the series
\begin{equation*}
\sum_{n=1}^\infty \frac{L_n}{n2^n} = 2\cdot \ln(2) = \sum_{n=1}^\infty \frac{2}{n2^n}.
\end{equation*}
\end{remark}

\subsection{Series with even Fibonacci (Lucas) coefficients}

Setting $x=L_{2m}/6, m\geq 0$, we first note that
\begin{equation*}
\lambda_1\Big (\frac{L_{2m}}{6}\Big ) = \frac{L_{2m}+\sqrt{L_{2m}^2-4}}{2} = \alpha^{2m},
\end{equation*}
with $\alpha=(1+\sqrt{5})/2$ and where we have used that $L_n^2=5F_n^2+(-1)^n4$.
We can now use \eqref{bf1} to obtain the relations, valid for all $|z|>\alpha^{2m}$,
\begin{equation}\label{main1FibC}
\sum_{n=0}^\infty \frac{(-1)^n}{2n+1}\frac{F_{2m(2n+1)}}{z^{2n+1}} = \frac{1}{\sqrt{5}}\arctan\Big (\sqrt{5} \frac{F_{2m}z}{z^2+1}\Big ),
\end{equation}
and
\begin{equation}\label{main1LucC}
\sum_{n=0}^\infty \frac{(-1)^n}{2n+1}\frac{L_{2m(2n+1)}}{z^{2n+1}} = \arctan\Big (\frac{L_{2m}z}{z^2-1}\Big ).
\end{equation}
Especially, for $z=L_{2m}$,
\begin{equation}
\sum_{n=0}^\infty \frac{(-1)^n}{2n+1} \frac{F_{2m(2n+1)}}{L_{2m}^{2n+1}} = 
\frac{1}{\sqrt{5}}\arctan\Big (\sqrt{5}\frac{F_{4m}}{L_{2m}^2+1}\Big ),
\end{equation}
and
\begin{equation}
\sum_{n=0}^\infty \frac{(-1)^n}{2n+1}\frac{L_{2m(2n+1)}}{L_{2m}^{2n+1}} = \arctan\Big (\frac{L_{2m}^2}{L_{2m}^2-1}\Big ).
\end{equation}
Next, from $\alpha^n=\alpha F_n + F_{n-1},n\geq 1,$ we see that
\begin{equation*}
\alpha^{2m} < 2F_{2m} + F_{2m-1} = F_{2m+2}.
\end{equation*}
Hence, inserting $z=F_{2m+2}$, we also get the relations
\begin{equation}
\sum_{n=0}^\infty \frac{(-1)^n}{2n+1} \frac{F_{2m(2n+1)}}{F_{2m+2}^{2n+1}} =
\frac{1}{\sqrt{5}}\arctan\Big (\sqrt{5}\frac{F_{2m}F_{2m+2}}{F_{2m+1}F_{2m+3}}\Big ),
\end{equation}
and
\begin{equation}
\sum_{n=0}^\infty \frac{(-1)^n}{2n+1}\frac{L_{2m(2n+1)}}{F_{2m+2}^{2n+1}} =
\arctan\Big (\frac{L_{2m}F_{2m+2}}{F_{2m}F_{2m+4}}\Big ),
\end{equation}
where we have used the Catalan identities
\begin{equation*}
F_{2m+2}^2+1 = F_{2m+3}F_{2m+1} \quad\mbox{and}\quad F_{2m+2}^2-1 = F_{2m+4}F_{2m}.
\end{equation*}

To derive Castellanos-like expressions for $\pi$ with Fibonacci (Lucas) coefficients,
we use \eqref{main1FibC} and \eqref{main1LucC}, and relate them to special arctan arguments. 
We start with
\begin{equation*}
\frac{\sqrt{5}F_{2m}z}{z^2+1}=1 \qquad \mbox{and} \qquad \frac{L_{2m}z}{z^2-1}=1.
\end{equation*}
Solving $z^2-\sqrt{5}F_{2m}z+1=0,m\geq 1,$ gives
\begin{equation*}
z_{1/2}=\frac{\sqrt{5}F_{2m}\pm \sqrt{5F_{2m}^2-4}}{2}.
\end{equation*}
But it can be checked easily that for all $m\geq 1$
\begin{equation*}
0 < z_2 = \frac{\sqrt{5}F_{2m} - \sqrt{5F_{2m}^2-4}}{2} < z_1 = \frac{\sqrt{5}F_{2m} + \sqrt{5F_{2m}^2-4}}{2} < \alpha^{2m}
\end{equation*}
with $\lim_{m\rightarrow\infty} z_2 = 0$ and $z_1\sim \alpha^{2m}$ as $m\rightarrow\infty$. 
Similarly, solving the equation $z^2-L_{2m}z-1=0,m\geq 0$ gives
\begin{equation*}
z_{1/2}=\frac{L_{2m}\pm \sqrt{L_{2m}^2+4}}{2}
\end{equation*}
and 
\begin{equation*}
z_2 = \frac{L_{2m} - \sqrt{L_{2m}^2+4}}{2} < 0 < \alpha^{2m} < z_1 = \frac{L_{2m} + \sqrt{L_{2m}^2+4}}{2}.
\end{equation*}
Note also that $\lim_{m\rightarrow\infty} z_2 = 0$ while $z_1\sim \alpha^{2m}$ as $m\rightarrow\infty$. 
Hence, inserting $z_1$ in \eqref{main1LucC} and simplifying proves the next result.

\begin{theorem} \label{thm2C}
For each integer $m\geq 0$ the following expression for $\pi$ is valid
\begin{equation}\label{main2LucC}
\frac{\pi}{4} = \sum_{n=0}^\infty \frac{(-1)^n}{2n+1} L_{2m(2n+1)} \Big (\frac{2}{L_{2m} + \sqrt{L_{2m}^2+4}}\Big )^{2n+1}.
\end{equation}
\end{theorem}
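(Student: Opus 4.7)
The plan is to specialize identity~\eqref{main1LucC} at the value $z=z_1=\frac{L_{2m}+\sqrt{L_{2m}^2+4}}{2}$, i.e., at the larger root of $z^2-L_{2m}z-1=0$ already identified in the paragraph preceding the theorem. This choice is made precisely so that the argument of the arctangent on the right-hand side of \eqref{main1LucC} collapses to $1$, producing $\arctan(1)=\pi/4$.

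First I would verify that the convergence hypothesis $|z|>\alpha^{2m}$ holds at $z=z_1$. This is stated in the discussion above the theorem but can be recorded in one line using $L_{2m}=\alpha^{2m}+\beta^{2m}$ with $\beta=(1-\sqrt{5})/2$: since $\sqrt{L_{2m}^2+4}>L_{2m}$, one gets
\[
2z_1 \;=\; L_{2m}+\sqrt{L_{2m}^2+4} \;>\; 2L_{2m} \;=\; 2\alpha^{2m}+2\beta^{2m} \;>\; 2\alpha^{2m},
\]
so \eqref{main1LucC} applies at $z=z_1$.

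Next I would use the defining equation $z_1^2=L_{2m}z_1+1$ to compute the arctangent argument. This gives $z_1^2-1=L_{2m}z_1$, hence $L_{2m}z_1/(z_1^2-1)=1$, so the right-hand side of \eqref{main1LucC} evaluates to $\arctan(1)=\pi/4$. On the left-hand side, $1/z_1=2/(L_{2m}+\sqrt{L_{2m}^2+4})$, and pulling this factor into the $(2n+1)$-th power yields exactly the series displayed in \eqref{main2LucC}.

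There is essentially no serious obstacle: the analytic content resides entirely in Theorem~\ref{thmC} (via~\eqref{main1LucC}), and the remaining work is an algebraic specialization. The only point requiring care is checking that the chosen $z_1$ lies in the radius of convergence $|z|>\alpha^{2m}$ and, simultaneously, makes the arctan argument equal to $1$; both are handled by the quadratic $z^2-L_{2m}z-1=0$ whose larger root has been singled out above. As a sanity check, at $m=0$ one obtains $z_1=1+\sqrt{2}$ and the identity reduces to the classical $2\arctan(\sqrt{2}-1)=\pi/4$.
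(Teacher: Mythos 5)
Your proposal is correct and follows exactly the paper's route: the paper likewise identifies $z_1=\frac{L_{2m}+\sqrt{L_{2m}^2+4}}{2}$ as the root of $z^2-L_{2m}z-1=0$ exceeding $\alpha^{2m}$ and substitutes it into \eqref{main1LucC} so the arctangent argument becomes $1$. The only addition you make is an explicit one-line verification of $z_1>\alpha^{2m}$ via $L_{2m}=\alpha^{2m}+\beta^{2m}$, which the paper asserts without detail.
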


Equation \eqref{main2LucC} is the first Lucas number counterpart of \eqref{cas}. 
It is worth to note, that the case $m=0$ yields
\begin{equation*}
\frac{\pi}{8} = \sum_{n=0}^\infty \frac{(-1)^n}{2n+1} \Big (\frac{1}{1 + \sqrt{2}}\Big )^{2n+1},
\end{equation*}
which can be obtained directly from \eqref{taylor} with $z=1/(1+\sqrt{2})$ and using the arctan identity
\begin{equation*}
2 \arctan(x) = \arctan\Big ( \frac{2x}{1-x^2}\Big ), \qquad x^2<1.
\end{equation*}
When $m=1$, then
\begin{equation*}
\frac{\pi}{4} = \sum_{n=0}^\infty \frac{(-1)^n}{2n+1}L_{4n+2} \Big (\frac{2}{3 + \sqrt{13}}\Big )^{2n+1}.
\end{equation*}

In a similar manner, working with other arguments of the arctan function, 
we can derive the following presumably new series for $\pi$ involving even Lucas numbers.

\begin{theorem} \label{thm3C}
For each integer $m\geq 0$ the following expressions for $\pi$ are valid
\begin{equation}\label{main3LucC}
\frac{\pi}{6} = \sum_{n=0}^\infty \frac{(-1)^n}{2n+1} L_{2m(2n+1)} \Big (\frac{2}{\sqrt{3}L_{2m} + \sqrt{3 L_{2m}^2+4}}\Big )^{2n+1},
\end{equation}
\begin{equation}\label{main4LucC}
\frac{\pi}{12} = \sum_{n=0}^\infty \frac{(-1)^n}{2n+1} L_{2m(2n+1)} 
\Big (\frac{2(2-\sqrt{3})}{L_{2m} + \sqrt{L_{2m}^2+4(2-\sqrt{3})^2}}\Big )^{2n+1},
\end{equation}
and
\begin{equation}\label{main5LucC}
\frac{\pi}{5} = \sum_{n=0}^\infty \frac{(-1)^n}{2n+1} L_{2m(2n+1)} 
\Big (\frac{2\sqrt{5-2\sqrt{5}}}{L_{2m} + \sqrt{L_{2m}^2+4(5-2\sqrt{5})}}\Big )^{2n+1}.
\end{equation}
\end{theorem}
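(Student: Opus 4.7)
The plan is to proceed exactly as in the proof of Theorem~\ref{thm2C}: use identity \eqref{main1LucC} and choose $z$ so that the argument of the arctangent on the right-hand side takes a value whose arctangent is a rational multiple of $\pi$. For the three series of the theorem, the relevant tangents are
\[
\tan\frac{\pi}{6}=\frac{1}{\sqrt{3}},\qquad \tan\frac{\pi}{12}=2-\sqrt{3},\qquad \tan\frac{\pi}{5}=\sqrt{5-2\sqrt{5}}.
\]

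For each target value $t$, the equation $L_{2m}z/(z^2-1)=t$ becomes the quadratic $tz^2-L_{2m}z-t=0$, whose positive root is
\[
z_1=\frac{L_{2m}+\sqrt{L_{2m}^2+4t^2}}{2t},\qquad \frac{1}{z_1}=\frac{2t}{L_{2m}+\sqrt{L_{2m}^2+4t^2}}.
\]
Inserting the three values of $t$ into $1/z_1$ — clearing the $\sqrt{3}$ in the case $t=1/\sqrt{3}$ — produces precisely the bases $2/(\cdots)$ appearing in \eqref{main3LucC}, \eqref{main4LucC}, and \eqref{main5LucC}. Substituting $z=z_1$ into \eqref{main1LucC} then yields $\arctan(t)=\theta$ on the right-hand side, which is the desired series identity.

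The only step requiring care is the convergence hypothesis $z_1>\alpha^{2m}$ demanded by \eqref{main1LucC}. From $\sqrt{L_{2m}^2+4t^2}>L_{2m}$ one obtains the crude bound $z_1>L_{2m}/t$, and for each of the three choices above $1/t$ equals $\sqrt{3}$, $2+\sqrt{3}$, and $(5-2\sqrt{5})^{-1/2}\approx 1.376$, all larger than $1$. Combined with $L_{2m}=\alpha^{2m}+\alpha^{-2m}>\alpha^{2m}$, this gives $z_1>\alpha^{2m}$ for every $m\geq 0$, so \eqref{main1LucC} indeed applies. No deeper obstacle is expected; the main nuisance is the irreducible surd $\sqrt{5-2\sqrt{5}}$, which is responsible for the somewhat unwieldy appearance of \eqref{main5LucC}.
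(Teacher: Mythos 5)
Your proposal is correct and matches the paper's intended argument exactly: the paper derives these three series "in a similar manner" to Theorem~\ref{thm2C}, i.e., by setting the arctangent argument in \eqref{main1LucC} equal to $\tan(\pi/6)$, $\tan(\pi/12)$, and $\tan(\pi/5)$ and taking the larger root of the resulting quadratic. Your convergence check $z_1>L_{2m}/t>\alpha^{2m}$ supplies the detail the paper leaves implicit, and the algebra reproducing the three stated bases is right.
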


\subsection{Series with odd Fibonacci (Lucas) coefficients}

Setting $x=\sqrt{5}F_{2m+1}/6, m\geq 0$, we observe that
\begin{equation*}
\lambda_1\Big (\frac{\sqrt{5}F_{2m+1}}{6}\Big ) = \frac{\sqrt{5}F_{2m+1}+L_{2m+1}}{2} = \alpha^{2m+1},
\end{equation*}
where again we have used $L_n^2=5F_n^2+(-1)^n4$.
Proceeding as before, inserting the value of $x$ into \eqref{main1C} and \eqref{main2C} we obtain the following
expressions via \eqref{bf2}, valid for all $|z|>\alpha^{2m+1}$,
\begin{equation}\label{mainFib_odd}
\sum_{n=0}^\infty \frac{(-1)^n}{2n+1}\frac{F_{(2m+1)(2n+1)}}{z^{2n+1}} =
\frac{1}{\sqrt{5}}\arctan\Big (\sqrt{5}F_{2m+1} \frac{z}{z^2-1}\Big ),
\end{equation}
and
\begin{equation}\label{mainLuc_odd}
\sum_{n=0}^\infty \frac{(-1)^n}{2n+1}\frac{L_{(2m+1)(2n+1)}}{z^{2n+1}} = \arctan\Big (L_{2m+1}\frac{z}{z^2+1}\Big ).
\end{equation}

Especially, for $z=L_{2m+1}$,
\begin{equation}
\sum_{n=0}^\infty \frac{(-1)^n}{2n+1} \frac{F_{(2m+1)(2n+1)}}{L_{2m+1}^{2n+1}} = 
\frac{1}{\sqrt{5}}\arctan\Big (\sqrt{5}\frac{F_{4m+2}}{L_{2m+1}^2+1}\Big ),
\end{equation}
and
\begin{equation}
\sum_{n=0}^\infty \frac{(-1)^n}{2n+1}\frac{L_{(2m+1)(2n+1)}}{L_{2m+1}^{2n+1}} = \arctan\Big (\frac{L_{2m+1}^2}{L_{2m+1}^2-1}\Big ).
\end{equation}

Once more, deriving Castellanos-like expressions for $\pi$ is fairly simple.
We start with
\begin{equation*}
\frac{\sqrt{5}F_{2m+1}z}{z^2-1}=1 \qquad \mbox{and} \qquad \frac{L_{2m+1}z}{z^2+1}=1.
\end{equation*}
Solving $z^2-\sqrt{5}F_{2m+1}z-1=0,m\geq 1,$ gives
\begin{equation*}
z_{1/2}=\frac{\sqrt{5}F_{2m+1}\pm \sqrt{5F_{2m+1}^2+4}}{2}.
\end{equation*}
But it can be checked easily that $z_{2}<\alpha^{2m+1}<z_{1}$ for all $m\geq 0$.
Inserting the value $z_1$ in \eqref{mainFib_odd} gives Castellanos' series \eqref{cas}.
Similarly, solving the equation $z^2-L_{2m+1}z+1=0,m\geq 0,$ gives
\begin{equation*}
z_{1/2}=\frac{L_{2m+1}\pm \sqrt{L_{2m+1}^2-4}}{2}
\end{equation*}
We note that, for $m=0$ the zeros $z_{1/2}$ are complex numbers with $|z_1|=|z_2|=1<\alpha$.
For $m\geq 1$, the zeros are real and $z_2 < z_1 < \alpha^{2m+1}$. Hence, no such series for $\pi$ will exist.

In the same manner other arctan arguments can be analyzed. 
Interestingly, the careful analysis shows that no series involving odd Lucas numbers will come to appearance. 
The next theorem contains additional odd-indexed Fibonacci series for $\pi$ that we found.

\begin{theorem} \label{thm4C}
For each integer $m\geq 0$ the following expressions for $\pi$ are valid
\begin{equation}\label{main1Fib}
\frac{\pi}{12} = \sqrt{5}\sum_{n=0}^\infty \frac{(-1)^n}{2n+1} F_{(2m+1)(2n+1)} \Big (\frac{2(2-\sqrt{3})}{\sqrt{5}F_{2m+1} + \sqrt{5F_{2m+1}^2+4(2-\sqrt{3})^2}}\Big )^{2n+1},
\end{equation}
\begin{equation}\label{main2Fib}
\frac{\pi}{6} = \sqrt{5}\sum_{n=0}^\infty \frac{(-1)^n}{2n+1} F_{(2m+1)(2n+1)} \Big (\frac{2}{\sqrt{15}F_{2m+1} + \sqrt{15F_{2m+1}^2+4}}\Big )^{2n+1},
\end{equation}
and
\begin{equation}\label{main3Fib}
\frac{\pi}{5} = \sqrt{5}\sum_{n=0}^\infty \frac{(-1)^n}{2n+1} F_{(2m+1)(2n+1)} \Big (\frac{2(\sqrt{5-2\sqrt{5}})}{\sqrt{5}F_{2m+1} + \sqrt{5F_{2m+1}^2+4(5-2\sqrt{5})}}\Big )^{2n+1}.
\end{equation}
\end{theorem}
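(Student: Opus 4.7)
The strategy parallels the derivation of Castellanos' identity \eqref{cas} earlier in this subsection. I would start from the master identity \eqref{mainFib_odd}, rewritten as
\[
\sqrt{5}\sum_{n=0}^\infty \frac{(-1)^n}{2n+1}\frac{F_{(2m+1)(2n+1)}}{z^{2n+1}} \;=\; \arctan\!\left(\frac{\sqrt{5}\,F_{2m+1}\,z}{z^{2}-1}\right), \qquad |z|>\alpha^{2m+1},
\]
and, for each target value $\pi/k\in\{\pi/12,\pi/6,\pi/5\}$, choose $z$ so that the arctan argument equals $\tan(\pi/k)$. The relevant tangent values are the well-known
\[
\tan(\pi/12)=2-\sqrt{3},\qquad \tan(\pi/6)=1/\sqrt{3},\qquad \tan(\pi/5)=\sqrt{5-2\sqrt{5}}.
\]
Denoting the common target by $\tau$, the condition $\sqrt{5}F_{2m+1}z/(z^{2}-1)=\tau$ becomes the quadratic
\[
\tau z^{2}-\sqrt{5}\,F_{2m+1}\,z-\tau=0,
\]
with roots $z_{1/2}=\bigl(\sqrt{5}F_{2m+1}\pm\sqrt{5F_{2m+1}^{2}+4\tau^{2}}\bigr)/(2\tau)$.

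The second step is straightforward algebra: substituting the three specific values of $\tau$ and keeping the larger root $z_{1}$ (with the plus sign), the reciprocal $1/z_{1}$ is precisely the base raised to the $(2n+1)$-st power in \eqref{main1Fib}, \eqref{main2Fib}, and \eqref{main3Fib}, respectively. For instance, with $\tau=1/\sqrt{3}$ the quadratic becomes $z^{2}-\sqrt{15}F_{2m+1}z-1=0$, so $1/z_{1}=2/(\sqrt{15}F_{2m+1}+\sqrt{15F_{2m+1}^{2}+4})$, matching \eqref{main2Fib}; the cases $\tau=2-\sqrt{3}$ and $\tau=\sqrt{5-2\sqrt{5}}$ produce \eqref{main1Fib} and \eqref{main3Fib} analogously.

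The only point that is not purely mechanical is the verification that $z_{1}>\alpha^{2m+1}$ for every $m\geq 0$, which is needed in order to apply \eqref{mainFib_odd}. This is the main obstacle. I would handle it by noting that $\alpha^{2m+1}=(\sqrt{5}F_{2m+1}+L_{2m+1})/2$ and reducing the required inequality to
\[
\sqrt{5F_{2m+1}^{2}+4\tau^{2}} \;>\; \tau L_{2m+1}+(\tau-1)\sqrt{5}\,F_{2m+1},
\]
which, upon squaring (after checking the sign of the right-hand side), follows from the defining identity $L_{2m+1}^{2}-5F_{2m+1}^{2}=-4$ combined with $\tau<1$ in each of the three cases; a direct check disposes of the small-$m$ base cases. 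Once this monotonicity is confirmed, inserting $z_{1}$ into \eqref{mainFib_odd} and multiplying both sides by $\sqrt{5}$ delivers the three asserted series at once.
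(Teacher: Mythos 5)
Your proposal is correct and is precisely the argument the paper intends: the paper states Theorem \ref{thm4C} without proof, remarking only that ``other arctan arguments can be analyzed'' after the derivation of \eqref{cas} from \eqref{mainFib_odd}, and your choice of $\tau=\tan(\pi/12)=2-\sqrt{3}$, $\tan(\pi/6)=1/\sqrt{3}$, $\tan(\pi/5)=\sqrt{5-2\sqrt{5}}$ together with the larger root of $\tau z^{2}-\sqrt{5}F_{2m+1}z-\tau=0$ reproduces exactly the three stated bases. Your verification that $z_{1}>\alpha^{2m+1}$ via $L_{2m+1}^{2}-5F_{2m+1}^{2}=-4$ and $0<\tau<1$ is sound and in fact supplies a detail the paper leaves implicit.
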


The special case of \eqref{main1Fib} when $m=0$ appears in one of Castellanos' papers \cite[Eq. (48)]{Cas1}. 
The generalization and the other two series seem to be new. 

\section{Complementing the work of Castellanos (Part 2)}

Replacing $z$ in \eqref{main1C} by $z/\lambda_1(x)$ and $z/\lambda_2(x)$, respectively, and combining the terms according to the Binet form 
gives
\begin{equation*}
\sum_{n=0}^\infty \frac{(-1)^{n}}{2n+1}\frac{B_{2n+1}^2(x)}{z^{2n+1}} = \frac{1}{4(9x^2-1)} \Big ( \arctan\Big (
\frac{2\sqrt{9x^2-1} \lambda_1(x) z}{z^2+\lambda_1^2(x)} \Big ) - \arctan\Big ( \frac{2\sqrt{9x^2-1} \lambda_2(x) z}{z^2+\lambda_2^2(x)}\Big )\Big )
\end{equation*}
Now, we can apply \eqref{arc_id} and simplify to end with the following identity involving squares of balancing polynomials
\begin{equation}\label{Bal_quad}
\sum_{n=0}^\infty \frac{(-1)^{n}}{2n+1}\frac{B_{2n+1}^2(x)}{z^{2n+1}} 
= \frac{1}{4(9x^2-1)} \arctan\Big (\frac{4(9x^2-1) z(z^2-1)}{z^4+6(12x^2-1)z^2+1} \Big ), \quad z >\lambda_1^2(x).
\end{equation}
Similarly, the same replacement in \eqref{main2C} gives 
\begin{equation}\label{LucBal_quad}
\sum_{n=0}^\infty \frac{(-1)^{n}}{2n+1}\frac{C_{2n+1}^2(x)}{z^{2n+1}} 
= \frac{1}{4}\arctan\Big (\frac{36x^2z(z^2-1)}{z^4-2(36x^2-1)z^2+1}\Big ), \quad z > \lambda_1^2(x).
\end{equation}
To get a series for, say, $\pi/128$ we set $x=1$ in \eqref{Bal_quad} and are left with the quadric
\begin{equation}\label{deg4eqn1}
z^4 - 32z^3 + 66z^2 + 32z + 1 = 0.
\end{equation}
The roots are
\begin{eqnarray*}
	8-\sqrt{47}-4\sqrt{7-\sqrt{47}}, \qquad 8-\sqrt{47}+4\sqrt{7-\sqrt{47}}, \\ 
	8+\sqrt{47}-4\sqrt{7+\sqrt{47}}, \qquad 8+\sqrt{47}+4\sqrt{7+\sqrt{47}}.
\end{eqnarray*}
None of the roots satisfies the condition for convergence and therefore we can conclude that no such series with $B_{2n+1}^2$ exists.
Setting $x=1$ in the above Lucas-balancing identity we obtain 
\begin{equation*}
\frac{\pi}{16} = \sum_{n=0}^\infty \frac{(-1)^{n}}{2n+1}\frac{C_{2n+1}^2}{z^{*2n+1}},
\end{equation*}
where $z^*$ is a positive root of the polynomial equation
\begin{equation}\label{deg4eqn1}
z^4 - 36z^3 - 70z^2 + 36z + 1 = 0.
\end{equation}
Here, also all four roots of equation (\ref{deg4eqn1}) are real
\begin{eqnarray*}
	9-7\sqrt{2}-3\sqrt{20-14\sqrt{2}}, \qquad 9-7\sqrt{2}+3\sqrt{20-14\sqrt{2}}, \\ 
	9+7\sqrt{2}-3\sqrt{20+14\sqrt{2}}, \qquad 9+7\sqrt{2}+3\sqrt{20+14\sqrt{2}}
\end{eqnarray*}
and the only (biggest) root that satisfies the condition $z >\lambda_1^2(1)$ is 
\begin{equation*}
z^* = 9+7\sqrt{2}+3\sqrt{20+14\sqrt{2}} = 37,8254\ldots. 
\end{equation*}
Hence,
\begin{equation}\label{pi33}
\frac{\pi}{16} = \sum_{n=0}^\infty \frac{(-1)^{n}}{2n+1}\frac{C_{2n+1}^2}{(9+7\sqrt{2}+3\sqrt{20+14\sqrt{2}})^{2n+1}}.
\end{equation}

From \eqref{Bal_quad} and \eqref{LucBal_quad}, with $x=L_{2m}/6$, we arrive at
\begin{equation}
\sum_{n=0}^\infty \frac{(-1)^{n}}{2n+1}\frac{F_{2m(2n+1)}^2}{z^{2n+1}} 
= \frac{1}{5} \arctan\Big (\frac{5F_{2m}^2 z(z^2-1)}{z^4+2(5F_{2m}^2+1)z^2+1} \Big )
\end{equation}
and
\begin{equation}
\sum_{n=0}^\infty \frac{(-1)^{n}}{2n+1}\frac{L_{2m(2n+1)}^2}{z^{2n+1}} 
= \arctan\Big (\frac{L_{2m}^2 z(z^2-1)}{z^4-2(L_{2m}^2-1)z^2+1} \Big ).
\end{equation}
To get a new Castellanos' type series for $\pi$ it is necessary to study the equations
\begin{equation}\label{deg4eqn2}
z^4 - L_{2m}^2 z^3 - 2(L_{2m}^2-1)z^2 + L_{2m}^2 z + 1 = 0
\end{equation}
and
\begin{equation}\label{deg4eqn3}
z^4 - 5F_{2m}^2 z^3 + 2(5F_{2m}^2+1)z^2 + 5F_{2m}^2 z + 1 = 0.
\end{equation}

Note that, if in \eqref{deg4eqn2} we set $a=-L_{2m}^2$ then the quartic takes the form
\begin{equation}\label{deg4eqn2_2}
z^4 + a z^3 + 2(a+1)z^2 - a z + 1 = 0.
\end{equation}
The four roots are given by
\begin{equation}\label{quartic_evenLuc1}
z_{1/2} = - \frac{a}{4} - \frac{1}{4} \sqrt{a^2-8a-16} \pm \frac{1}{2}\sqrt{\frac{a^2-4a+a\sqrt{a^2-8a-16}}{2}}
\end{equation}
and
\begin{equation}\label{quartic_evenLuc2}
z_{3/4} = - \frac{a}{4} + \frac{1}{4} \sqrt{a^2-8a-16} \pm \frac{1}{2}\sqrt{\frac{a^2-4a-a\sqrt{a^2-8a-16}}{2}}.
\end{equation}
Finally, if we set $\tilde{t}=-(a+\sqrt{a^2-8a-16})/4$ and $t=-(a-\sqrt{a^2-8a-16})/4$ then
\begin{equation*}
z_{1/2} = \tilde{t} \pm \sqrt{\tilde{t}^2+1} \qquad \mbox{and} \qquad z_{3/4} = t \pm \sqrt{t^2+1}.
\end{equation*}
Among the four roots we choose the biggest root that satisfies the condition for convergence and arrive at the main theorem.

\begin{theorem} \label{thm_quadC}
For each $m\geq 0$ we have the following series for $\pi$ involving squared even-indexed Lucas numbers
\begin{equation}\label{main_quadC}
\frac{\pi}{4} = \sum_{n=0}^\infty \frac{(-1)^{n}}{2n+1}\frac{L_{2m(2n+1)}^2}{(t + \sqrt{t^2+1})^{2n+1}}
\end{equation}
with
\begin{equation}\label{t_value}
t = t(m) = \frac{L_{2m}^2 + \sqrt{L_{2m}^4 + 8L_{2m}^2 - 16}}{4}.
\end{equation}
\end{theorem}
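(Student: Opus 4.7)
The plan is to specialize equation \eqref{LucBal_quad} to $x = L_{2m}/6$, using $C_{2n+1}(L_{2m}/6) = L_{2m(2n+1)}/2$ from \eqref{bf1}. The factor of $1/4$ on both sides cancels, yielding the squared-Lucas identity displayed immediately before the theorem. To pick up a $\pi/4$ on the left, I would force the arctangent argument to equal $1$; a direct computation shows this is equivalent to $z$ being a root of the quartic \eqref{deg4eqn2}. So the theorem reduces to solving that quartic and choosing the real root that respects the convergence condition $z > \lambda_1^2(L_{2m}/6) = \alpha^{4m}$.

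The key observation is that \eqref{deg4eqn2_2} is anti-reciprocal: with $a := -L_{2m}^2$, the substitution $z \mapsto -1/z$ leaves it invariant, so its roots come in pairs $\{z, -1/z\}$. Dividing by $z^2$ and setting $w := z - 1/z$ (so that $z^2 + z^{-2} = w^2 + 2$) collapses \eqref{deg4eqn2_2} into the quadratic
\[
w^2 + a w + 2(a+2) = 0,
\]
with roots $w_\pm = \bigl(-a \pm \sqrt{a^2 - 8a - 16}\bigr)/2$. Each $w$ then gives two values of $z$ via $z^2 - wz - 1 = 0$, namely $z = s \pm \sqrt{s^2+1}$ with $s := w/2$. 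The larger choice $s = w_+/2$ matches the $t$ of \eqref{t_value}, while $s = w_-/2$ matches the $\tilde t$ in the text; in this way we recover all four roots in the compact parameterization stated just before the theorem.

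The final step is to identify the largest root and verify convergence. Since $\phi(u) := u + \sqrt{u^2+1}$ is strictly increasing on $\mathbb{R}$ and $t > \tilde t \geq 0$ for every $m \geq 0$, the maximal root is $z^* = t + \sqrt{t^2+1}$. The one genuinely nonroutine point I anticipate is the inequality $z^* > \alpha^{4m}$. Inverting $\phi$ (via $u = (y - 1/y)/2$) reduces this to the cleaner inequality $t > (\alpha^{4m} - \alpha^{-4m})/2 = \sqrt{5}\,F_{4m}/2$, which can be verified from the explicit form of $t$ together with the Binet-type identities $L_{2m}^2 = \alpha^{4m} + 2 + \alpha^{-4m}$ and $F_{4m} = F_{2m}L_{2m}$; for $m=0$ one directly checks $z^* = 1 + \sqrt{2} + \sqrt{4+2\sqrt{2}} = \cot(\pi/16) > 1 = \alpha^0$. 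Once convergence is secured, substituting $z = z^*$ into the specialized identity makes the arctangent collapse to $\arctan(1) = \pi/4$, yielding exactly \eqref{main_quadC}.
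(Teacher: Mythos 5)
Your proposal follows the same overall route as the paper: specialize \eqref{LucBal_quad} at $x=L_{2m}/6$ via \eqref{bf1}, force the arctangent argument to equal $1$, reduce to the quartic \eqref{deg4eqn2}, and select the largest root $t+\sqrt{t^2+1}$ subject to $z>\alpha^{4m}$. Where you genuinely diverge is in how the quartic is handled and in how much is actually verified. The paper simply writes down the four roots \eqref{quartic_evenLuc1}--\eqref{quartic_evenLuc2} and then observes a posteriori that they can be packaged as $u\pm\sqrt{u^2+1}$; your reduction via the anti-reciprocal symmetry $z\mapsto -1/z$ and the substitution $w=z-1/z$, collapsing \eqref{deg4eqn2_2} to $w^2+aw+2(a+2)=0$, \emph{derives} that packaging and explains why it must occur — this is cleaner and checks out ($w_\pm=(-a\pm\sqrt{a^2-8a-16})/2$, so $s=w_+/2=t$). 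You also do more than the paper on the convergence condition: the paper only asserts that the biggest root satisfies $z>\lambda_1^2(L_{2m}/6)$, whereas your reduction of $z^*>\alpha^{4m}$ to $t>\sqrt{5}F_{4m}/2$ via the inverse of $\phi(u)=u+\sqrt{u^2+1}$ is correct and the resulting inequality does hold (writing $A=\alpha^{4m}$, $B=\alpha^{-4m}$, it amounts to $L_{2m}^4+8L_{2m}^2-16-(A-3B-2)^2=8(2A+1-B^2)>0$). One small slip: your claim that $\tilde t\geq 0$ is false — since $L_{2m}^2\geq 4>2$ one has $\sqrt{L_{2m}^4+8L_{2m}^2-16}>L_{2m}^2$ and hence $\tilde t<0$ for every $m\geq 0$ (e.g., $\tilde t(0)=1-\sqrt{2}$) — but this is harmless, as the identification of the maximal root only needs $t>\tilde t$ together with the monotonicity of $\phi$ and the negativity of the two roots $u-\sqrt{u^2+1}$.
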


Theorem \ref{thm_quadC} is a wonderful Lucas analogue of Castellanos' equation \eqref{cas2}. 
For $m=0$, $t(0)=1+\sqrt{2}$ and 
\begin{equation}\label{quad_Lucm0}
	\frac{\pi}{16} = \sum_{n=0}^\infty \frac{(-1)^{n}}{2n+1}\frac{1}{(1+\sqrt{2}+\sqrt{4+2\sqrt{2}})^{2n+1}}.
\end{equation}
For $m=1$, $t(1)=(9+\sqrt{137})/4$ and  
\begin{equation}\label{quad_Lucm1}
\frac{\pi}{16} = \sum_{n=0}^\infty \frac{(-1)^{n}}{2n+1}\frac{L_{4n+2}^2 4^{2n}}{(9+\sqrt{137}+3\sqrt{26+2\sqrt{137}})^{2n+1}}.
\end{equation}
%For $m=2$, $t(2)=(49+\sqrt{2777})/4$ and  
%\begin{equation}\label{quad_Lucm2}
%	\frac{\pi}{16} = \sum_{n=0}^\infty \frac{(-1)^{n}}{2n+1}\frac{L_{8n+4}^2 4^{2n}}{(49+\sqrt{2777}+7\sqrt{106+2\sqrt{2777}})^{2n+1}}.
%\end{equation}

The analysis of the quartic \eqref{deg4eqn3} is very similar. With $b=-5F_{2m}^2$ the relevant equation becomes
\begin{equation}\label{deg4eqn3_2}
z^4 + b z^3 + 2(1-b)z^2 - b z + 1 = 0.
\end{equation}
The four roots are given by
\begin{equation*}\label{quartic_evenFib1}
z_{1/2} = - \frac{b}{4} - \frac{1}{4} \sqrt{b^2+8b-16} \pm \frac{1}{2}\sqrt{\frac{b^2+4b+b\sqrt{b^2+8b-16}}{2}}
\end{equation*}
and
\begin{equation*}\label{quartic_evenFib2}
z_{3/4} = - \frac{b}{4} + \frac{1}{4} \sqrt{b^2+8b-16} \pm \frac{1}{2}\sqrt{\frac{b^2+4b-b\sqrt{b^2+8b-16}}{2}}.
\end{equation*}
These roots can be expressed as 
\begin{equation*}
z_{1/2} = \tilde{s} \pm \sqrt{\tilde{s}^2+1} \qquad \mbox{and} \qquad z_{3/4} = s \pm \sqrt{s^2+1}
\end{equation*}
with $\tilde{s}=\tilde{s}(m)=-(b+\sqrt{b^2+8b-16})/4$ and $s=s(m)=-(b-\sqrt{b^2+8b-16})/4$. 
For $m=0$ and $m=1$ we have only complex roots: $\tilde{s}(0)=-i, s(0)=i$ and $\tilde{s}(1)=(5-\sqrt{31}i)/4, s(1)=(5+\sqrt{31}i)/4$
with $i=\sqrt{-1}$. For $m\geq 2$, we get the real roots but no root satisfies the condition $z >\lambda_1^2(L_{2m}/6) = \alpha^{4m}$. 

In a similar manner other arctan arguments can be treated, yielding (if they exist) other series for $\pi$ 
with squares of Fibonacci and Lucas numbers.

\section{Similar series via subseries}

Before closing the investigation, we present some additional results that are closely related to those from Section 2.
The next theorem will be useful to prove some subseries analogues of these results. 

\begin{theorem} \label{thm1_sub}
For each real $x$ with $x>1/3$ and $z\in\mathbb{C}$ with $|z|>|\lambda_1(x)|$, we have the following identity
\begin{eqnarray}\label{main1_sub}
\sum_{n=0}^\infty \frac{1}{4n+3}\frac{B_{4n+3}(x)}{z^{4n+3}} & = &
\frac{1}{8\sqrt{9x^2-1}}\ln\Big (\frac{z^2+2\sqrt{9x^2-1}z-1}{z^2-2\sqrt{9x^2-1}z-1}\Big ) \nonumber \\
&& \quad\quad - \frac{1}{4\sqrt{9x^2-1}}\arctan\Big (\frac{2\sqrt{9x^2-1}z}{z^2+1}\Big )
\end{eqnarray}
and
\begin{equation}\label{main2_sub}
\sum_{n=0}^\infty \frac{1}{4n+3}\frac{C_{4n+3}(x)}{z^{4n+3}} =
\frac{1}{8}\ln\Big (\frac{z^2+6xz+1}{z^2-6xz+1}\Big ) - \frac{1}{4}\arctan\Big (\frac{6xz}{z^2-1}\Big ).
\end{equation}
\end{theorem}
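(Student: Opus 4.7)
The plan is to combine equation \eqref{main1C} (respectively \eqref{main2C}) with a companion identity obtained from the inverse hyperbolic tangent series, and then extract the $(4n+3)$-subseries by taking a suitable linear combination.

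First I would invoke the Taylor expansion
\begin{equation*}
\frac{1}{2}\ln\Big(\frac{1+w}{1-w}\Big) = \sum_{n=0}^\infty \frac{w^{2n+1}}{2n+1}, \qquad |w|<1.
\end{equation*}
Applying this with $w = \lambda_1(x)/z$ and $w = \lambda_2(x)/z$ and using the Binet formula \eqref{Binet}, the hypothesis $|z|>|\lambda_1(x)|$ ensures convergence and one obtains, after combining the two logarithms over a common argument and using $\lambda_1\lambda_2=1$ and $\lambda_1-\lambda_2=2\sqrt{9x^2-1}$,
\begin{equation*}
\sum_{n=0}^\infty \frac{1}{2n+1}\frac{B_{2n+1}(x)}{z^{2n+1}} = \frac{1}{4\sqrt{9x^2-1}}\ln\Big(\frac{z^2+2\sqrt{9x^2-1}\,z-1}{z^2-2\sqrt{9x^2-1}\,z-1}\Big).
\end{equation*}
The analogous computation with $C_n$, using $\lambda_1+\lambda_2=6x$, gives
\begin{equation*}
\sum_{n=0}^\infty \frac{1}{2n+1}\frac{C_{2n+1}(x)}{z^{2n+1}} = \frac{1}{4}\ln\Big(\frac{z^2+6xz+1}{z^2-6xz+1}\Big).
\end{equation*}

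Next, I would split the index $2n+1$ according to parity of $n$, writing
\begin{equation*}
S_1^B := \sum_{k=0}^\infty \frac{1}{4k+1}\frac{B_{4k+1}(x)}{z^{4k+1}}, \qquad S_3^B := \sum_{k=0}^\infty \frac{1}{4k+3}\frac{B_{4k+3}(x)}{z^{4k+3}},
\end{equation*}
so that the log identity above reads $S_1^B + S_3^B = \tfrac{1}{4\sqrt{9x^2-1}}\ln(\cdots)$, while Theorem \ref{thmC} gives $S_1^B - S_3^B = \tfrac{1}{2\sqrt{9x^2-1}}\arctan(\cdots)$. Subtracting and dividing by two isolates $S_3^B$ and yields \eqref{main1_sub}. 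The same device with $S_1^C, S_3^C$ and equation \eqref{main2C} produces \eqref{main2_sub}.

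No step here is delicate: the only place to be careful is the algebraic simplification of
\begin{equation*}
\frac{(1+\lambda_1/z)(1-\lambda_2/z)}{(1-\lambda_1/z)(1+\lambda_2/z)} = \frac{(z+\lambda_1)(z-\lambda_2)}{(z-\lambda_1)(z+\lambda_2)},
\end{equation*}
whose numerator and denominator reduce via $\lambda_1\lambda_2=1$ and $\lambda_1-\lambda_2=2\sqrt{9x^2-1}$ (and analogously $\lambda_1+\lambda_2=6x$ for the $C_n$ case) to the quadratics appearing in the statement. Everything else is routine rearrangement of convergent series.
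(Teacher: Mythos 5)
Your proof is correct and takes essentially the same route as the paper: the paper simply cites Chen's ready-made subseries formula $\sum_{n\ge 0} z^{4n+3}/(4n+3) = \tfrac14\ln\big(\tfrac{1+z}{1-z}\big) - \tfrac12\arctan(z)$ and substitutes $\lambda_1(x)/z$ and $\lambda_2(x)/z$ exactly as in Theorem \ref{thmC}, which is precisely the linear combination of the $\operatorname{artanh}$- and $\arctan$-series that you carry out by hand via the parity split. Your algebraic reductions of $(z\pm\lambda_1)(z\mp\lambda_2)$ to the stated quadratics and the resulting coefficients $\tfrac{1}{8\sqrt{9x^2-1}}$, $\tfrac14$, etc.\ all check out.
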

\begin{proof}
From Chen's paper \cite{Chen} we have the relation
\begin{equation}\label{Chen1}
\sum_{n=0}^\infty \frac{z^{4n+3}}{4n+3} = \frac{1}{4}\ln\Big (\frac{1+z}{1-z}\Big ) - \frac{1}{2}\arctan(z), \quad |z|<1.
\end{equation}
The remainder of the proof is the same as in the proof of Theorem \ref{thmC}.
\end{proof}

In the applications we can proceed as in the previous sections. 
First, from \eqref{main1_sub} and \eqref{main2_sub} with $x=1$ it is evident that 
we can obtain some series for $\pi$ with balancing and Lucas-balancing coefficients by 
appropriately choosing the argument of the arctan function on the right. 
The structure of the equations also shows that all series will start with a constant logarithmic term.
These arguments yield to the following exotic representations:
\begin{equation}\label{bm0}
\frac{\pi}{3} = \ln \Big (\frac{46+4\sqrt{138}+8\sqrt{12}+4\sqrt{46}}{46+4\sqrt{138}-8\sqrt{12}-4\sqrt{46}}\Big )
- 16\sqrt{2} \sum_{n=0}^\infty \frac{1}{4n+3} \frac{B_{4n+3}}{(2\sqrt{6}+\sqrt{23} )^{4n+3}},
\end{equation}
\begin{equation}\label{cm0}
\frac{\pi}{3} = \ln \Big (\frac{56+12\sqrt{21}+18\sqrt{3}+12\sqrt{7}}{56+12\sqrt{21}-18\sqrt{3}-12\sqrt{7}}\Big )
- 8 \sum_{n=0}^\infty \frac{1}{4n+3} \frac{C_{4n+3}}{(3\sqrt{3}+2\sqrt{7} )^{4n+3}},
\end{equation}
\begin{eqnarray}\label{bm1}
\frac{\pi}{6} & = & \ln \Big (\frac{2+8\sqrt{3}+4\sqrt{2+8\sqrt{3}}+4(2-\sqrt{3})(4+\sqrt{2+8\sqrt{3}})}
{2+8\sqrt{3}+4\sqrt{2+8\sqrt{3}}-4(2-\sqrt{3})(4+\sqrt{2+8\sqrt{3}})}\Big ) \nonumber\\
&& \qquad - 16\sqrt{2} \sum_{n=0}^\infty \frac{1}{4n+3} B_{4n+3}\Big ( \frac{2-\sqrt{3}}{2\sqrt{2}+\sqrt{1+4\sqrt{3}}} \Big )^{4n+3},
\end{eqnarray}
and
\begin{eqnarray}\label{cm1}
\frac{\pi}{6} & = & \ln \Big (\frac{32-8\sqrt{3}+12\sqrt{4-\sqrt{3}}+6(2-\sqrt{3})(3+2\sqrt{4-\sqrt{3}})}
{32-8\sqrt{3}+12\sqrt{4-\sqrt{3}}-6(2-\sqrt{3})(3+2\sqrt{4-\sqrt{3}})}\Big ) \nonumber\\
&& \qquad - 8 \sum_{n=0}^\infty \frac{1}{4n+3} C_{4n+3}\Big ( \frac{2-\sqrt{3}}{3+2\sqrt{4-\sqrt{3}}} \Big )^{4n+3}.
\end{eqnarray}

Working with Fibonacci and Lucas numbers we can derive the next series, 
where each series is stated as a separate theorem without proof.

\begin{theorem} \label{thm2_sub}
For each integer $m\geq 0$ the following series identity holds
\begin{eqnarray}\label{main3_sub}
\frac{\pi}{2} & = & \ln\Big (\frac{1}{2}\Big (5F_{2m+1}^2 + \sqrt{5}F_{2m+1}\sqrt{5F_{2m+1}^2+4} + 2 \Big )\Big ) \nonumber \\
&& \qquad - 4\cdot \sqrt{5} \sum_{n=0}^\infty \frac{1}{4n+3} F_{(2m+1)(4n+3)} \Big (\frac{2}{\sqrt{5}F_{2m+1}
+ \sqrt{5F_{2m+1}^2+4}}\Big )^{4n+3}.
\end{eqnarray}
\end{theorem}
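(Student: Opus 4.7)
The plan is to apply equation \eqref{main2_sub} from Theorem \ref{thm1_sub} with the specific substitution $x=\sqrt{5}F_{2m+1}/6$, exactly mimicking the derivation that led from \eqref{main2C} to \eqref{mainFib_odd} but now with the subseries indexed by $4n+3$ rather than $2n+1$. With this choice of $x$ we have $6x=\sqrt{5}F_{2m+1}$, and the odd-index relation \eqref{bf2} gives
\[
C_{4n+3}\!\Big(\frac{\sqrt{5}}{6}F_{2m+1}\Big)=\frac{\sqrt{5}}{2}\,F_{(2m+1)(4n+3)},
\]
so \eqref{main2_sub} converts directly into an identity for the Fibonacci series of interest (up to the factor $\sqrt{5}/2$).

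Next, I would eliminate the arctan term by choosing $z$ so that its argument equals $1$, i.e., by solving $\sqrt{5}F_{2m+1}z=z^2-1$. This is the same quadratic that appeared in the discussion surrounding \eqref{mainFib_odd}, whose larger root is
\[
z_1=\frac{\sqrt{5}F_{2m+1}+\sqrt{5F_{2m+1}^2+4}}{2},
\]
already shown there to satisfy the convergence requirement $z_1>\alpha^{2m+1}=|\lambda_1(\sqrt{5}F_{2m+1}/6)|$. Substituting $z=z_1$ turns $\arctan(\sqrt{5}F_{2m+1}z/(z^2-1))$ into $\pi/4$, contributing $-\pi/(8\sqrt{5})$ on the right.

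The remaining task is to simplify the logarithmic term. Using the defining relation $z_1^{\,2}=\sqrt{5}F_{2m+1}z_1+1$, the numerator $z_1^2+\sqrt{5}F_{2m+1}z_1+1$ collapses to $2\sqrt{5}F_{2m+1}z_1+2$ while the denominator $z_1^2-\sqrt{5}F_{2m+1}z_1+1$ collapses to $2$, so the argument of the logarithm reduces to $\sqrt{5}F_{2m+1}z_1+1$. Expanding $\sqrt{5}F_{2m+1}z_1$ yields
\[
\sqrt{5}F_{2m+1}z_1+1=\frac{1}{2}\!\left(5F_{2m+1}^2+\sqrt{5}F_{2m+1}\sqrt{5F_{2m+1}^2+4}+2\right),
\]
which is exactly the expression inside the logarithm in \eqref{main3_sub}. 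Multiplying the whole identity through by $-4\sqrt{5}$ and rearranging isolates $\pi/2$ on the left and gives the claimed formula, with the geometric factor $1/z_1^{4n+3}=\bigl(2/(\sqrt{5}F_{2m+1}+\sqrt{5F_{2m+1}^2+4})\bigr)^{4n+3}$.

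No step is genuinely difficult: the derivation is a straight template transfer of the $(2n+1)$-case to the $(4n+3)$-case. The only part requiring care is the constant-of-integration bookkeeping, i.e., verifying that the algebraic collapse of the logarithm's argument via the quadratic identity for $z_1$ matches the closed form stated in \eqref{main3_sub} and that the prefactors $\sqrt{5}/2$, $1/8$, $1/4$ aggregate correctly to produce $4\sqrt{5}$ in front of the series and $1$ in front of the logarithm.
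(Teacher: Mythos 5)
Your proposal is correct and is exactly the derivation the paper intends: the theorem is stated without proof, with the remark that one proceeds as in Section 2, i.e., apply \eqref{main2_sub} with $x=\sqrt{5}F_{2m+1}/6$, use \eqref{bf2} to convert $C_{4n+3}$ into $\frac{\sqrt{5}}{2}F_{(2m+1)(4n+3)}$, and choose $z=z_1=\tfrac{1}{2}\big(\sqrt{5}F_{2m+1}+\sqrt{5F_{2m+1}^2+4}\big)>\alpha^{2m+1}$ so the arctangent equals $\pi/4$. Your simplification of the logarithm via $z_1^2=\sqrt{5}F_{2m+1}z_1+1$ and the bookkeeping of the constants are both accurate.
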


The series in Theorem \ref{thm2_sub} is the Fibonacci subseries analogue of Castellanos' series \eqref{cas}.

When $m=0$ the expression reads as
\begin{equation*}
\frac{\pi}{2} = \ln\Big (\frac{7+3\sqrt{5}}{2} \Big ) 
- 4\sqrt{5} \sum_{n=0}^\infty \frac{1}{4n+3} F_{4n+3} \Big (\frac{2}{3 + \sqrt{5}}\Big )^{4n+3},
\end{equation*}
which has the equivalent form
\begin{equation}
\frac{\pi}{8} = \ln (\alpha) - \sqrt{5} \sum_{n=0}^\infty \frac{1}{4n+3} \frac{F_{4n+3}}{\alpha^{8n+6}}.
\end{equation}

We conclude with three other examples exhibiting this structure and leave the derivations to the interested reader.

\begin{theorem} \label{thm3_sub}
For each integer $m\geq 0$ the following expression for $\pi$ is valid
\begin{equation}\label{main4_sub}
\frac{\pi}{2} = \ln\Big (\frac{1}{2}\Big (L_{2m}^2 + L_{2m}\sqrt{L_{2m}^2+4} + 2 \Big )\Big )
- 4 \sum_{n=0}^\infty \frac{1}{4n+3} L_{2m(4n+3)} \Big (\frac{2}{L_{2m} + \sqrt{L_{2m}^2+4}}\Big )^{4n+3}.
\end{equation}
\end{theorem}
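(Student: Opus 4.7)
The plan is to mirror the derivation of Theorem \ref{thm2C}, but apply it to the subseries identity \eqref{main2_sub} rather than to the arctan identity \eqref{main2C}. First I would substitute $x=L_{2m}/6$ into \eqref{main2_sub}; using the relation $C_{4n+3}(L_{2m}/6)=L_{2m(4n+3)}/2$ from \eqref{bf1} and multiplying both sides by $2$, this gives, for any $|z|>\alpha^{2m}$,
\begin{equation*}
\sum_{n=0}^\infty \frac{1}{4n+3}\frac{L_{2m(4n+3)}}{z^{4n+3}} = \frac{1}{4}\ln\Big(\frac{z^2+L_{2m}z+1}{z^2-L_{2m}z+1}\Big) - \frac{1}{2}\arctan\Big(\frac{L_{2m}z}{z^2-1}\Big).
\end{equation*}

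Next I would select $z$ so as to force the arctan argument to equal $1$, exactly as in the derivation of Theorem \ref{thm2C}. This amounts to solving $z^2-L_{2m}z-1=0$, whose larger real root $z_1=(L_{2m}+\sqrt{L_{2m}^2+4})/2$ was already shown to exceed $\alpha^{2m}$, so the convergence hypothesis is satisfied. With this choice the arctan term equals $\pi/4$, contributing $-\pi/8$ to the right-hand side.

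The remaining task is to simplify the logarithm at $z=z_1$. Because $z_1$ satisfies $z_1^2+1=L_{2m}z_1$, the numerator becomes $z_1^2+L_{2m}z_1+1 = 2L_{2m}z_1+2 = 2z_1^2$ and the denominator collapses to $z_1^2-L_{2m}z_1+1=2$, so the ratio inside the logarithm is simply $z_1^2$. A direct expansion yields
\begin{equation*}
z_1^2 = \frac{1}{2}\bigl(L_{2m}^2 + L_{2m}\sqrt{L_{2m}^2+4} + 2\bigr),
\end{equation*}
which is precisely the argument appearing in \eqref{main4_sub}.

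Multiplying the resulting equation through by $4$ and rearranging isolates $\pi/2$ on the left, produces the logarithmic constant as above, and leaves the subseries with the factor $(2/(L_{2m}+\sqrt{L_{2m}^2+4}))^{4n+3}=1/z_1^{4n+3}$ in front of $L_{2m(4n+3)}$, exactly matching \eqref{main4_sub}. The only mildly tricky step is the algebraic collapse of the log argument to $z_1^2$; everything else is substitution and rearrangement, so I would not expect any real obstacle.
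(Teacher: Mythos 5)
Your proof is correct and follows exactly the route the paper intends: the paper states Theorem \ref{thm3_sub} without proof, and your derivation (substitute $x=L_{2m}/6$ into \eqref{main2_sub} via \eqref{bf1}, then choose $z=z_1=(L_{2m}+\sqrt{L_{2m}^2+4})/2$ as in the derivation of Theorem \ref{thm2C}) is precisely the omitted argument. One small typo: the relation satisfied by $z_1$ is $z_1^2-1=L_{2m}z_1$, not $z_1^2+1=L_{2m}z_1$; your subsequent algebra correctly uses the former, so the conclusion stands.
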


\begin{theorem} \label{thm4_sub}
For each integer $m\geq 0$, the following identity for $\pi$ is valid
\begin{eqnarray}\label{main5_sub}
\frac{\pi}{6} &=& \ln\left(\frac{(9+5\sqrt{3})L_{2m}^2+(3+\sqrt{3})L_{2m} \sqrt{(2+\sqrt{3})^2L_{2m}^2+4} +4} 
{(5+3\sqrt{3})L_{2m}^2+(1+\sqrt{3})L_{2m}\sqrt{(2+\sqrt{3})^2L_{2m}^2+4} +4}\right) \nonumber \\
& & - 4\sum_{n=0}^\infty \frac{1}{4n+3} L_{2m(4n+3)} \Big (\frac{2}{(2+\sqrt{3})L_{2m} + \sqrt{(2+\sqrt{3})^2L_{2m}^2+4}}\Big )^{4n+3}.
\end{eqnarray}
\end{theorem}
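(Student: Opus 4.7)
The plan is to specialize Theorem~\ref{thm1_sub} at $x=L_{2m}/6$. With $6x=L_{2m}$ and $C_{4n+3}(L_{2m}/6)=L_{2m(4n+3)}/2$ by \eqref{bf1}, equation \eqref{main2_sub} multiplied by $8$ and rearranged becomes
\[
2\arctan\Big(\frac{L_{2m}z}{z^2-1}\Big)=\ln\Big(\frac{z^2+L_{2m}z+1}{z^2-L_{2m}z+1}\Big)-4\sum_{n=0}^\infty \frac{1}{4n+3}\frac{L_{2m(4n+3)}}{z^{4n+3}},
\]
valid whenever $|z|>\alpha^{2m}$, exactly as in the preceding theorems of Section~4.

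To produce $\pi/6$ on the left, I ask that $\arctan(\cdot)=\pi/12$, that is, that the argument equal $\tan(\pi/12)=2-\sqrt{3}$. The equation $L_{2m}z=(2-\sqrt{3})(z^2-1)$, after multiplication by $2+\sqrt{3}$, reduces to the quadratic $z^2-(2+\sqrt{3})L_{2m}z-1=0$, whose positive root
\[
z_1=\frac{(2+\sqrt{3})L_{2m}+\sqrt{(2+\sqrt{3})^2L_{2m}^2+4}}{2}
\]
is precisely the reciprocal of the base appearing in \eqref{main5_sub}. Convergence is easy: since $\sqrt{(2+\sqrt{3})^2L_{2m}^2+4}>(2+\sqrt{3})L_{2m}$, one obtains $z_1>(2+\sqrt{3})L_{2m}\geq(2+\sqrt{3})\alpha^{2m}>\alpha^{2m}$ for every $m\geq 0$.

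The last step is to rewrite the logarithm. The quadratic relation yields $z_1^2+1=(2+\sqrt{3})L_{2m}z_1$, so
\[
z_1^2\pm L_{2m}z_1+1=\big((2+\sqrt{3})\pm 1\big)L_{2m}z_1+2.
\]
Substituting $2z_1=(2+\sqrt{3})L_{2m}+\sqrt{(2+\sqrt{3})^2L_{2m}^2+4}$ and evaluating $(3+\sqrt{3})(2+\sqrt{3})=9+5\sqrt{3}$ together with $(1+\sqrt{3})(2+\sqrt{3})=5+3\sqrt{3}$, the factor $1/2$ cancels between numerator and denominator, and one reads off exactly the ratio inside the logarithm of \eqref{main5_sub}. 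The one real obstacle is this final bookkeeping of nested radicals; everything preceding it is an immediate substitution into Theorem~\ref{thm1_sub} combined with a standard arctangent evaluation.
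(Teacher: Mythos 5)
Your derivation is correct and is precisely the route the paper intends (the paper leaves this theorem's proof to the reader, noting only that it follows the same pattern as Theorems \ref{thm2_sub} and \ref{thm3_sub}): specialize \eqref{main2_sub} at $x=L_{2m}/6$ via \eqref{bf1}, choose $z$ so that the arctangent argument equals $\tan(\pi/12)=2-\sqrt{3}$, check $z_1>\alpha^{2m}$, and simplify the logarithm using the quadratic satisfied by $z_1$. The only blemish is the slip ``$z_1^2+1=(2+\sqrt{3})L_{2m}z_1$'', which should read $z_1^2+1=(2+\sqrt{3})L_{2m}z_1+2$; your subsequent displayed identity already carries the $+2$, so nothing downstream is affected.
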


\begin{theorem} \label{thm5_sub}
For each integer $m\geq 0$, we have the following expression for $\pi$
\begin{eqnarray}\label{main6_sub}
\frac{\pi}{6} &=& \ln\left(\frac{(9+5\sqrt{3})\sqrt{5}\Big(\sqrt{5}F_{2m+1}^2+F_{2m+1} \sqrt{5F_{2m+1}^2+4(2-\sqrt{3})^2}\Big) +4} 
{(5+3\sqrt{3})\sqrt{5}\Big(\sqrt{5}F_{2m+1}^2+F_{2m+1} \sqrt{5F_{2m+1}^2+4(2-\sqrt{3})^2}\Big) +4}\right) \nonumber \\
& & - 4\sqrt{5}\sum_{n=0}^\infty \frac{1}{4n+3} F_{(2m+1)(4n+3)}\Big(\frac{2(2-\sqrt{3})}{\sqrt{5}F_{2m+1} + \sqrt{5F_{2m+1}^2+4(2-\sqrt{3})^2}}\Big )^{4n+3}.
\end{eqnarray}
\end{theorem}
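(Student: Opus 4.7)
The plan is to specialize equation \eqref{main2_sub} at $x=\sqrt{5}F_{2m+1}/6$ and proceed in parallel with the earlier Fibonacci-subseries derivations. Since $4n+3$ is odd, the second relation in \eqref{bf2} yields $C_{4n+3}(\sqrt{5}F_{2m+1}/6)=(\sqrt{5}/2)F_{(2m+1)(4n+3)}$, and with $6x=\sqrt{5}F_{2m+1}$, multiplying \eqref{main2_sub} through by $8/\sqrt{5}$ gives
\[
4\sqrt{5}\sum_{n=0}^\infty \frac{1}{4n+3}\frac{F_{(2m+1)(4n+3)}}{z^{4n+3}} = \ln\!\left(\frac{z^2+\sqrt{5}F_{2m+1}z+1}{z^2-\sqrt{5}F_{2m+1}z+1}\right) - 2\arctan\!\left(\frac{\sqrt{5}F_{2m+1}z}{z^2-1}\right),
\]
valid for $|z|>\alpha^{2m+1}$.

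To isolate $\pi/6$ after transposition, one needs $2\arctan(\cdot)=\pi/6$, i.e.\ $\arctan(\cdot)=\pi/12$, so the argument must equal $\tan(\pi/12)=2-\sqrt{3}$. Solving $(2-\sqrt{3})z^2-\sqrt{5}F_{2m+1}z-(2-\sqrt{3})=0$ and selecting
\[
z_1=\frac{\sqrt{5}F_{2m+1}+\sqrt{5F_{2m+1}^2+4(2-\sqrt{3})^2}}{2(2-\sqrt{3})},
\]
whose reciprocal is exactly the base raised to the $(4n+3)$-th power in the statement, one readily verifies $z_1>\sqrt{5}F_{2m+1}/(2-\sqrt{3})=(2+\sqrt{3})\sqrt{5}F_{2m+1}>\alpha^{2m+1}$, so the convergence condition is met. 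Substituting $\sqrt{5}F_{2m+1}z_1=(2-\sqrt{3})(z_1^2-1)$ into the logarithm, the numerator and denominator factor as $(\sqrt{3}-1)(\sqrt{3}z_1^2+1)$ and $(\sqrt{3}-1)(z_1^2+\sqrt{3})$, so the log collapses to $\ln\bigl((\sqrt{3}z_1^2+1)/(z_1^2+\sqrt{3})\bigr)$.

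It remains to recognize this log as the asymmetric expression in Theorem~\ref{thm5_sub}. Set $P:=F_{2m+1}\bigl(\sqrt{5}F_{2m+1}+\sqrt{5F_{2m+1}^2+4(2-\sqrt{3})^2}\bigr)=2(2-\sqrt{3})F_{2m+1}z_1$; then $\sqrt{5}P=2(2-\sqrt{3})^2(z_1^2-1)=(14-8\sqrt{3})(z_1^2-1)$. Direct expansion gives $(9+5\sqrt{3})\sqrt{5}P+4=2(3-\sqrt{3})(z_1^2-1)+4$ and $(5+3\sqrt{3})\sqrt{5}P+4=2(\sqrt{3}-1)(z_1^2-1)+4$, whose quotient reduces to $(\sqrt{3}z_1^2+1)/(z_1^2+\sqrt{3})$ via $3-\sqrt{3}=\sqrt{3}(\sqrt{3}-1)$, matching the log above. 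The main obstacle is this final algebraic repackaging of the logarithm into the asymmetric form stated in Theorem~\ref{thm5_sub}; the analytic content is entirely inherited from Theorem~\ref{thm1_sub}.
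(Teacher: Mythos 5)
Your proof is correct and follows exactly the route the paper intends (the paper leaves this derivation to the reader): specialize \eqref{main2_sub} at $x=\sqrt{5}F_{2m+1}/6$ via \eqref{bf2}, choose $z$ so that the arctangent argument equals $\tan(\pi/12)=2-\sqrt{3}$, check $z_1>\alpha^{2m+1}$, and repackage the logarithm — and all your algebra checks out. The only blemish is a harmless slip: the displayed identity results from multiplying \eqref{main2_sub} by $8$ (absorbing the factor $\sqrt{5}/2$ from $C_{4n+3}$), not by $8/\sqrt{5}$.
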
 	
 
\section{Conclusion}

Castellanos has derived curious series for $\pi$ involving odd Fibonacci and odd squared Fibonacci numbers.
The goal of the paper was to provide complements of his work. This was achieved by studying some infinite series involving
balancing and Lucas-balancing polynomials. We have also indicated how more series representations of this nature can be  
established.

%\section{Acknowledgments}
%The authors would like to thank the anonymous referee for valuable comments and suggestions that improved the original version of the article.

\section{Appendix: Numerical experiments}

All series that have been derived in this paper have been verified numerically for different values of 
the parameters $m$ and $n$ using MATLAB. Here, we present some of the numerical results we obtained, i.e., 
our results concerning the behavior of the partial sums, paying particular attention to Sections 2 and 3. 
The reference value for $\pi$ is
\begin{equation*}
3.14159265358979323846264338327950288419716939937510582097494459230781...
\end{equation*}
We begin with a comparison of the series \eqref{cas2_1} and \eqref{Luc_bal_pi}, 
which is summarized in the first table. 

\begin{table}[h!]
	\begin{tabular}{c|c| c }
		\hline
		\textbf{\small n} &\textbf{\small Series \eqref{cas2_1} (correct digits of $\pi$)} & \textbf{\small Series \eqref{Luc_bal_pi} (correct digits for $\pi$)} \\
		\hline
		\small 10 & \small 3.1415926540096031579541983575759 (8) & \small 3.1681253658683134338813758290598 (1) \\
		\small 25 & \small 3.1415926535897931755199500028705 (15) & \small 3.1394749981715568271090077701047 (1) \\
		\small 50 & \small 3.1415926535897931755212222064311 (15) & \small 3.1416593510252682593586123771523 (3) \\
		\small 75 & \small 3.1415926535897931755212222064311 (15) & \small 3.1415898900113144709178285437189 (4) \\
		\small 100 & \small 3.1415926535897931755212222064311 (15) & \small 3.1415927819755543664446705866538 (6) \\ \hline
	\end{tabular}
	\centering 
	\caption{Comparison of series \eqref{cas2_1} and series \eqref{Luc_bal_pi}}
\end{table}

Series \eqref{cas2_1} converges much faster to $\pi$ than \eqref{Luc_bal_pi}. For instance, for $n=25$
the partial sum equals 3.1415926535897931755199500028705 and gives 15 correct decimal places of $\pi$, 
whereas series (\ref{Luc_bal_pi}) only gives one correct decimal place in this case. 
Even for the big input $n=100$ \eqref{Luc_bal_pi} is able to get only 6 correct decimal places for $\pi$. 

The series \eqref{main2LucC}-\eqref{main5LucC} and their Fibonacci counterparts from \eqref{main1Fib}-\eqref{main3Fib} 
exhibit much better convergence properties as is seen from the next tables. 
Finally, we state the numerical results for the series containing squared arguments in the numerator.

\begin{table}[h!]
	\resizebox{\textwidth}{!}{
\begin{tabular}{c|c|c|c}
%\hiderowcolors 
\hline
$n$ & {Series} & Partial sum and correct decimal places & Partial sum and correct decimal places \\
			 &	 & of $\pi$, when $m=0$    & of $\pi$, when $m=1$ \\
\hline
$n=10$
& \eqref{main2LucC} & 3.1415926540617729889571081190938 (8) &  3.1421196570445914842979046044391 (2) \\
& \eqref{main3LucC} & 3.14159265358982738004655360471 (12) & 3.1415926671660013427841119922951 (7) \\
& \eqref{main4LucC} & 3.1415926535897947339924377936013 (14) & 3.1415926535897956856265976351507 (14) \\
& \eqref{main5LucC} & 3.1415926535921310968341963746407 (10) & 3.1415939762233089574900159429531 (5) \\
\hline
$n=25$
& \eqref{main2LucC} & 3.1415926535897935109299308733563 (15) & 3.1415924425521257791274315531176 (6)\\
& \eqref{main3LucC} & 3.141592653589793111803909553252 (15) & 3.1415926535897930747899824982812 (15)\\
& \eqref{main4LucC} & 3.1415926535897947339867034604622 (14) & 3.1415926535897944192246093796406 (14)\\
& \eqref{main5LucC} & 3.141592653589792871679833893134 (14) & 3.1415926535896632627309996371646 (12)\\
\hline
$n=50$ %\multirow{$n=50$}
& \eqref{main2LucC} & 3.1415926535897935109305996238453 (15) & 3.1415926535907651359119064125043 (10)\\
& \eqref{main3LucC} & 3.1415926535897931118039095532521 (15) & 3.1415926535897930771752670402775 (15)\\
& \eqref{main4LucC} &  3.1415926535897947339867034604622 (14) & 3.1415926535897944192246093796406 (14)\\
& \eqref{main5LucC} & 3.1415926535897928716798338954132 (14) & 3.1415926535897928493289793465367 (14)\\
\hline
$n=75$ %\multirow{$n=75$}
& \eqref{main2LucC} & 3.1415926535897935109305996238453 (15) & 3.1415926535897932445405743444239 (16)\\
& \eqref{main3LucC} & 3.1415926535897931118039095532521 (15) & 3.1415926535897930771752670402775 (15)\\
& \eqref{main4LucC} &  3.1415926535897947339867034604622 (14) & 3.1415926535897944192246093796406 (14)\\
& \eqref{main5LucC} & 3.1415926535897928716798338954132 (14) & 3.1415926535897928493289787796672 (14)\\
\hline
$n=100$%\multirow{$n=100$}
& \eqref{main2LucC} & 3.1415926535897935109305996238453 (15) & 3.1415926535897932504225822917772 (16)\\
& \eqref{main3LucC} & 3.1415926535897931118039095532521 (15) & 3.1415926535897930771752670402775 (15)\\
& \eqref{main4LucC} &  3.1415926535897947339867034604622 (14) & 3.1415926535897944192246093796406 (14)\\
& \eqref{main5LucC} & 3.1415926535897928716798338954132 (14) & 3.1415926535897928493289787796672 (14) \\
\hline
\end{tabular}}
\centering
\caption{Series with even indexed Lucas coefficients}
\end{table}

\begin{table}[h!]
	\resizebox{\textwidth}{!}{
		\begin{tabular}{c|c|c|c}
			\hline
			$n$ & {Series} & Partial sum and correct decimal places & Partial sum and correct decimal places \\
			&	 & of $\pi$, when $m=0$    & of $\pi$, when $m=1$ \\
			\hline
			$n=10$
			& \eqref{main1Fib} & 3.1415926535897945091605064246388 (14) & 3.1415926535898033991093222132522 (12) \\
			& \eqref{main2Fib} & 3.1415926536976031512950065859936 (9) & 3.1415927863118601169382604268352 (6) \\
			& \eqref{main3Fib} & 3.1415926617777441429048832649656 (7) & 3.1416085221242421932891054056989 (3) \\
			& \eqref{cas} & 3.1415946628008058070270975343262 (5) & 3.1513648928833562151579600460063 (1) \\
			\hline
			$n=25$
			& \eqref{main1Fib} & 3.1415926535897944941490876249881 (14) & 3.1415926535897942690609285922555 (14) \\
			& \eqref{main2Fib} & 3.1415926535897929748407090095711 (14) & 3.1415926535897928788785708458934 (14) \\
			& \eqref{main3Fib} & 3.1415926535897927600555556635439 (14) & 3.1415926535469317102875642213194 (10) \\
			& \eqref{cas} & 3.1415926535893304765153112605509 (12) & 3.1413911371820930200840194675568 (3) \\
			\hline
			$n=50$
			& \eqref{main1Fib} & 3.1415926535897944941490876249881 (14) & 3.1415926535897942690609285922555 (14) \\
			& \eqref{main2Fib} & 3.141592653589792974840740857073 (14) & 3.1415926535897933582722721433178 (15) \\
			& \eqref{main3Fib} & 3.1415926535897927609931468741211 (14) & 3.1415926535897929816708906429346 (14) \\
			& \eqref{cas} & 3.1415926535897932529555146219012 (16) & 3.1415933178255789377527632793825 (5) \\
			\hline
			$n=75$ %\multirow{$n=75$}
			& \eqref{main1Fib} & 3.1415926535897944941490876249881 (14) & 3.1415926535897942690609285922555 (14) \\
			& \eqref{main2Fib} & 3.141592653589792974840740857073 (14) & 3.1415926535897933582722721433093 (15) \\
			& \eqref{main3Fib} & 3.1415926535897927609931468741211 (14) & 3.1415926535897929816235883662303 (14) \\
			& \eqref{cas} & 3.1415926535897932529555062020827 (16) & 3.1415926507102507935365145416425 (8) \\
			\hline
			$n=100$%\multirow{$n=100$}
			& \eqref{main1Fib} & 3.1415926535897944941490876249881 (14) & 3.1415926535897942690609285922555 (14) \\
			& \eqref{main2Fib} & 3.141592653589792974840740857073 (14) & 3.1415926535897933582722721433093 (15) \\
			& \eqref{main3Fib} & 3.1415926535897927609931468741211 (14) & 3.1415926535897929816235883662988 (14) \\
			& \eqref{cas} & 3.1415926535897932529555062020827 (16) & 3.1415926536037885968968215682418 (9) \\
			\hline
	\end{tabular}}
	\centering
	\caption{Series with odd indexed Fibonacci coefficients including Castellaons series \eqref{cas}}
\end{table} 

\begin{table}[h!]
	\begin{tabular}{c|c }
		\hline
		\textbf{\small n} &\textbf{\small Series \eqref{pi33} (correct digits of $\pi$) }\\ \hline
		\small 10 & \small 3.1500288764501782022564579932578 (1) \\
		\small 25 & \small 3.1414500748004593281290444672926 (3) \\
		\small 50 & \small 3.1415929908436770508682631082098 (6) \\
		\small 75 & \small 3.1415926525406591485619208477698 (8) \\
		\small 100 & \small 3.1415926535934520603658154364136 (10) \\ \hline
	\end{tabular}
	\centering 
	\caption{Series with odd squared Lucas-balancing coefficients}
\end{table}

\begin{table}[h]
	\begin{tabular}{c|c| c }
		\hline
		\textbf{\small n} &\textbf{\small Series \eqref{main_quadC} when $m=0$,} & \textbf{\small Series \eqref{main_quadC} when $m=1$} \\
		&	\small (correct digits of $\pi$) & \small (correct digits of $\pi$) \\
		\hline
		\small 10 & \small 3.141592653589793277636375358286 (16) & \small 3.1416003226810113749580541993994 (3) \\
		\small 25 & \small 3.1415926535897932279669723561782 (16) & \small 3.1415926535792334445640011923928 (10) \\
		\small 50 & \small 3.1415926535897932279669723561782 (16) & \small 3.1415926535897932072655992250783 (16) \\
		\small 75 & \small 3.1415926535897932279669723561782 (16) & \small 3.1415926535897932072618102929481 (16) \\
		\small 100 & \small 3.1415926535897932279669723561782 (16) & \small 3.1415926535897932072618102929499 (16) \\ \hline
	\end{tabular}
	\centering 
	\caption{Comparison of series \eqref{quad_Lucm0} and series \eqref{quad_Lucm1}}
\end{table}

%\vspace*{+.1in}
%\noindent
%Received August 21 2016;
%revised version received December 4 2016.
%Published in {\it Journal of Integer Sequences}, December 5 2016.

%\bigskip
%\hrule
%\bigskip

%\noindent
%Return to
%\htmladdnormallink{Journal of Integer Sequences home page}{http://www.cs.uwaterloo.ca/journals/JIS/}.
%\vskip .1in

\end{document}